\newcommand\1{{\bf 1}}
\newcommand\op[1]{\mathop{\rm #1}\nolimits}
\newcommand\C{\mathbb{C}}
\newcommand\R{\mathbb{R}}
\newcommand\de{\delta}
\newcommand\g{\mathfrak{g}}
\newcommand\h{\mathfrak{h}}
\newcommand\La{\Lambda}
\newcommand\m{\mathfrak{m}}
\newcommand\ot{\otimes}
\renewcommand\th{\theta}
\newcommand\vp{\varphi}
\newcommand\we{\wedge}
\theoremstyle{plain}
\newtheorem{theorema}{Theorem}
\newtheorem{thm}{Theorem}
\newtheorem{prop}[theorema]{Proposition}
\newtheorem{lem}{Lemma}
\newtheorem{dfn}[theorema]{Definition}
\begin{document}
\title[Reconstruction from Representations: Jacobi via Cohomology]{Reconstruction from Representations:\\ Jacobi via Cohomology}
\author{B. Kruglikov, H. Winther}
 \address{Department of Mathematics and Statistics, Faculty of Science and Technology,
 UiT the Arctic University of Norway, Troms\o\ 90-37, Norway}
 \email{ boris.kruglikov@uit.no, \quad henrik.winther@uit.no.}
 \maketitle

 \begin{abstract}
A subalgebra of a Lie algebra $\h\subset\g$ determines $\h$-representation $\rho$ on $\m=\g/\h$.
In this note we discuss how to reconstruct $\g$ from $(\h,\m,\rho)$.
In other words, we find all the ingredients for building non-reductive Klein geometries.
The Lie algebra cohomology plays a decisive role here.
 \end{abstract}

\maketitle

\section*{Introduction}

Let $\g$ be a Lie algebra and $\h\subset\g$ a subalgebra. Let $\m=\g/\h$ be the quotient $\h$-module with representation
$\rho:\h\to\op{End}(\m)$. We address the reconstruction problem for the Lie algebra structure of $\g$
from the data $(\h,\m,\rho)$.

In this note the spaces ($\h,\m$) are finite-dimensional. We do not assume
the existence of an embedding $\m\subset\g$ as a reductive ($\h$-invariant) complement to $\h$,
as such embeddings exist in general only if $H^1(\h,\m^*\ot\h)=0$.

We show that the Lie algebra cohomology\footnote{We recall the definition of the Lie algebra cohomology
in the appendix.} $H^1(\h,\mathbb{V})$ for $\h$-modules $\mathbb{V}$ plays
a key role in the reconstruction.
Parametrizing Lie brackets on $\h\oplus\m$, the Jacobi identity constrains the parameters.
Our cohomological approach allows to single out linear equations,
and significantly reduce the amount of quadratic constraints.

Klein geometries are homogeneous spaces $G/H$, and such have been extensively studied
for reductive subgroups $H$. Our method allows to effectively handle non-reductive Klein geometries
via symbolic computations.

Some other approaches to the reconstruction in the case of filtered algebras
via the deformation technique can be found in \cite{R,FF,CK}.
We also mention Cartan's procedure for construction of homogeneous models 
of a given geometry type \cite{C}.

\section{The main result}

Since $\h$ is a subalgebra, the bracket $\Lambda^2\h\to\h\subset \g$ is the Lie algebra structure on
$\h$, but since $\m$ is not a reductive complement the other brackets on $\g$ are:
 \begin{align*}
\h\ot\m\to\g=\h\oplus\m,&\quad [h,u]=\vp(h,u)+h\cdot u;\\
\La^2\m\to\g=\h\oplus\m,&\quad [u_1,u_2]=\th_\h(u_1,u_2)+\th_\m(u_1,u_2),
 \end{align*}
where $h\in\h$, $u,u_1,u_2\in\m$, $\vp\in\h^*\ot\m^*\ot\h$, $\th_\h\in\La^2\m^*\ot\h$,
$\th_\m\in\La^2\m^*\ot\m$, and we denote (here and in what follows) $h\cdot u=\rho(h)u$.

The cohomology class of $\vp$ is known from the splitting theory for modules \cite{Gi}.
We interpret it as the complete obstruction to the existence of a reductive complement.

 \begin{prop}
The element $\vp$ is closed in the complex $\La^\bullet\h^*\ot\m^*\ot\h$: $d\vp=0$,
and it changes by an exact element when $\m\subset\g$ varies. Thus $[\vp]\in H^1(\h,\m^*\ot\h)$.
 \end{prop}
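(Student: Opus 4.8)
The plan is to realize $\vp$ as a $1$-cochain in the Chevalley--Eilenberg complex $C^\bullet(\h,\mathbb V)=\La^\bullet\h^*\ot\mathbb V$ with coefficients in the module $\mathbb V=\m^*\ot\h\cong\op{Hom}(\m,\h)$, on which $\h$ acts by combining the contragredient of $\rho$ on $\m^*$ with the adjoint action on $\h$: explicitly, for $\psi\in\op{Hom}(\m,\h)$ and $h\in\h$ one has $(h\cdot\psi)(u)=[h,\psi(u)]-\psi(h\cdot u)$. Reading $\vp$ as the map $h\mapsto\vp(h,\cdot)\in\op{Hom}(\m,\h)$, I would extract both closedness $d\vp=0$ and the transformation rule under a change of complement directly from the Jacobi identity of $\g$.

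For closedness I would write out, for $h_1,h_2\in\h$ and $u\in\m$, using the $0$- and $1$-cochain formulas of the complex together with the module action above,
\[
(d\vp)(h_1,h_2)(u)=[h_1,\vp(h_2,u)]-\vp(h_2,h_1\cdot u)-[h_2,\vp(h_1,u)]+\vp(h_1,h_2\cdot u)-\vp([h_1,h_2],u).
\]
In parallel I would expand the Jacobi identity $[[h_1,h_2],u]+[[h_2,u],h_1]+[[u,h_1],h_2]=0$ via the bracket rules $[h,u]=\vp(h,u)+h\cdot u$ and $[u,h]=-[h,u]$, then split the result along $\g=\h\oplus\m$. The $\m$-component collapses to $(\rho([h_1,h_2])-[\rho(h_1),\rho(h_2)])u=0$, which holds automatically since $\rho$ is a representation and thus serves only as a consistency check; the $\h$-component is precisely $-(d\vp)(h_1,h_2)(u)$, so it vanishes and yields $d\vp=0$.

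For the cohomology class I would recall that two linear complements to $\h$ correspond to splittings $s,s'$ of $0\to\h\to\g\to\m\to0$ differing by a map $\lambda\in\op{Hom}(\m,\h)$, i.e.\ $s'=s+\lambda$, and note that $\rho$ is intrinsic to $\g/\h$ and hence unchanged. Computing $[h,s'(u)]=[h,s(u)]+[h,\lambda(u)]$ and re-decomposing along the new complement gives $\vp_{s'}(h,u)=\vp_s(h,u)+[h,\lambda(u)]-\lambda(h\cdot u)=\vp_s(h,u)+(d\lambda)(h)(u)$, so $\vp$ changes by the exact cochain $d\lambda$ and $[\vp]\in H^1(\h,\m^*\ot\h)$ is well defined. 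The only genuine difficulty is bookkeeping: fixing the sign conventions in the module action and in the differential so that the $\h$-part of Jacobi lines up term-by-term with $d\vp$; once these are pinned down, both assertions follow immediately.
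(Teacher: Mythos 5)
Your proposal is correct and follows essentially the same route as the paper: closedness of $\vp$ is read off from the component of the Jacobi identity $\op{Jac}(h_1,h_2,u)=0$ valued in $\h$, and the change of complement $s\mapsto s+\lambda$ produces exactly the coboundary $d\lambda$ of the $0$-cochain $\lambda\in\m^*\ot\h$, matching the paper's $\tilde\vp=\vp+d\sigma$ for $\tilde\m=\op{graph}(\sigma)$. Your identification of the components is the accurate one (the $\m$-part gives the representation property of $\rho$ and the $\h$-part gives $d\vp=0$), which is what the paper's displayed computation actually shows even though its prose labels the two parts the other way around.
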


 \begin{proof}
The Jacobi identity with 2 arguments from $\h$ is
 $$
[h_1,[h_2,u]]+[h_2,[u,h_1]]+[u,[h_1,h_2]]=0.
 $$
The $\h$-part of this is the representation property of $\rho$, while the $\m$-part yields
{\footnotesize
\begin{gather*}
[h_1,\vp(h_2,u)]+\vp(h_1,h_2\cdot u)-[h_2,\vp(h_1,u)]-\vp(h_2,h_1\cdot u)-\vp([h_1,h_2],u)=0,\ \textrm{i.e.} \\
(h_1\cdot\vp)(h_2,u)-(h_2\cdot\vp)(h_1,u)+\vp([h_1,h_2],u)=0,\ \textrm{i.e.}
 \end{gather*}}\vspace{-12pt}
 $$
d\vp(h_1,h_2)(u)=0\quad\Leftrightarrow\quad \vp\in Z^1(\h,\m^*\ot\h).
 $$
If we change $\m$ to $\tilde\m=\op{graph}(\sigma)=\{\sigma(u)+u\in\g=\h\oplus\m\}$
for $\sigma\in\m^*\ot\h$ then $\vp$ is changed to $\tilde\vp=\vp+d\sigma$, so
$[\vp]\in Z^1(\h,\m^*\ot\h)/B^1(\h,\m^*\ot\h)$.
 \end{proof}

Let us introduce the linear operator $\de$ as the composition
 $$
\Lambda^i\h^*\ot\m^*\ot\h\stackrel{\rho}\longrightarrow\Lambda^i\h^*\ot\m^*\ot\m^*\ot\m
\stackrel{\op{alt}_{2,3}}\longrightarrow\Lambda^i\h^*\ot\La^2\m^*\ot\m
 $$
where $\op{alt}_{2,3}$ is the skew-symmetrization by the corresponding arguments, e.g.
 $$
\delta\vp(h)(u_1,u_2)=\vp(h,u_1)\cdot u_2-\vp(h,u_2)\cdot u_1.
 $$
 \begin{lem}\label{L0}
For the differential $d$ in the complex $\La^\bullet\h^*\ot\m^*\ot\h$
we have: $[\de,d]=0$, whence $d\delta\vp=0$ and $\delta(\vp+d\sigma)=\delta\vp+d\delta\sigma$.
 \end{lem}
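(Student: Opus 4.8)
The plan is to recognize $\de$ as the fiberwise extension of a single fixed coefficient map and then to invoke the naturality of the Chevalley--Eilenberg differential under module morphisms. Concretely, write $\Phi\colon\m^*\ot\h\to\La^2\m^*\ot\m$ for the map $\Phi(\psi)(u_1,u_2)=\psi(u_1)\cdot u_2-\psi(u_2)\cdot u_1$, where $\psi\in\m^*\ot\h$ is viewed as a linear map $\m\to\h$. The operator $\de$ is nothing but $\mathrm{id}_{\La^i\h^*}\ot\,\Phi$: the displayed formula $\de\vp(h)(u_1,u_2)=\vp(h,u_1)\cdot u_2-\vp(h,u_2)\cdot u_1$ is exactly $\Phi$ applied to $\vp(h)\in\m^*\ot\h$, and the two-step composition $\rho$ followed by $\op{alt}_{2,3}$ is precisely the coordinate-free description of $\Phi$. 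Since $d$ is assembled solely from the $\h$-action on the coefficient module and the bracket of $\h$, any $\h$-module homomorphism on coefficients induces a chain map; so once $\Phi$ is known to be $\h$-equivariant, $[\de,d]=0$ follows formally.

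The content therefore lies entirely in checking that $\Phi$ intertwines the two $\h$-actions, and this is the step I expect to be the main obstacle. I would spell out the action on the source $\m^*\ot\h=\op{Hom}(\m,\h)$, namely $(g\cdot\psi)(u)=[g,\psi(u)]-\psi(g\cdot u)$, and on the target $\La^2\m^*\ot\m=\op{Hom}(\La^2\m,\m)$, namely $(g\cdot T)(u_1,u_2)=g\cdot T(u_1,u_2)-T(g\cdot u_1,u_2)-T(u_1,g\cdot u_2)$. Expanding $\Phi(g\cdot\psi)(u_1,u_2)$ and using that $\rho$ is a genuine representation, so that $\rho([g,\psi(u_i)])=[\rho(g),\rho(\psi(u_i))]$, the term $\rho(g)\,\Phi(\psi)(u_1,u_2)$ separates out from six terms, and the remaining four reassemble exactly into $-\Phi(\psi)(g\cdot u_1,u_2)-\Phi(\psi)(u_1,g\cdot u_2)$. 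This gives $\Phi(g\cdot\psi)=g\cdot\Phi(\psi)$. It is precisely here that the hypothesis that $\rho$ is a Lie algebra representation, and not merely a linear map $\h\to\op{End}(\m)$, enters.

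With $\Phi$ established as a morphism, I would conclude by the naturality argument: for $\omega\in\La^i\h^*\ot(\m^*\ot\h)$ each summand in the Chevalley--Eilenberg formula for $d$ either applies $g\cdot(-)$ to a coefficient value, which commutes past $\Phi$ by equivariance, or merely relabels arguments via $[h_j,h_k]$, which commutes past $\Phi$ by linearity; hence $d(\de\omega)=\de(d\omega)$, i.e. $[\de,d]=0$. The two asserted consequences are then immediate: $d\de\vp=\de\,d\vp=0$ since $\vp$ is closed by the Proposition, and $\de(\vp+d\sigma)=\de\vp+\de\,d\sigma=\de\vp+d\de\sigma$.
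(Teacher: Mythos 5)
Your proof is correct and follows essentially the same route as the paper: the paper's entire argument is the one-line observation that $\de$ is induced by an $\h$-homomorphism of the coefficient modules, which is exactly your naturality argument, and your explicit verification that $\Phi$ is $\h$-equivariant (using the representation property of $\rho$) correctly supplies the detail the paper leaves implicit.
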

{\footnotesize
\begin{proof}
This is because $\delta$ is obtained from an $\h$-homomorphism of the coefficients module.
 \end{proof}}\vspace{-2pt}

Given $\th_\m\in\La^2\m^*\ot\m$ satisfying $\de\vp=d\th_\m$, let us introduce the nonlinear operator
 $$
Q:\h^*\ot\m^*\ot\h\to\h^*\ot\La^2\m^*\ot\h
 $$
by $Q\vp=\op{Sq}_2(\vp,\vp)-\vp\circ(\1_\h\we\th_\m)$, where $\op{Sq}_2$ is the composition
 $$
(\h^*\ot\m^*\ot\h)^{\ot2}=\h^*\ot\m^*\ot\h\ot\h^*\ot\m^*\ot\h
\stackrel{\op{contr}_{3,4}}\longrightarrow
\h^*\ot\m^*\ot\m^*\ot\h\stackrel{\op{alt}_{2,3}}\longrightarrow\h^*\ot\La^2\m^*\ot\h,
 $$
and $\op{contr}_{3,4}$ is the contraction by the corresponding arguments.
In other words,
 $$
Q\vp(h)(u_1,u_2)=\vp(\vp(h,u_1),u_2)-\vp(\vp(h,u_2),u_1)-\vp(h,\th_\m(u_1,u_2)).
 $$
 \begin{lem}\label{L1}
We have $d(Q\vp)=0$, i.e.\ $[Q\vp]\in H^1(\h,\La^2\m^*\ot\h)$.
 \end{lem}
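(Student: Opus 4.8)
The plan is to realize $Q\vp$ as a single component of the Jacobiator of an auxiliary bracket and then invoke the Bianchi identity that this Jacobiator automatically satisfies. Concretely, I would equip $V=\h\oplus\m$ with the anti-symmetric bracket $\mu$ assembled from the four given pieces $[\cdot,\cdot]_\h$, $\vp$, $\rho$ and $\th_\m$, setting the remaining component $\th_\h:=0$. No Jacobi identity is assumed for $\mu$; its failure is measured by the Jacobiator $J(x,y,z)=\mu(\mu(x,y),z)+\mu(\mu(y,z),x)+\mu(\mu(z,x),y)$, a map $\La^3 V\to V$ whose components I would organize by the $\h/\m$-type of the three inputs.

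Then I identify which components vanish: the $(\h,\h,\h)$-component vanishes because $\h$ is a subalgebra; the $(\h,\h,\m)$-component vanishes because $\rho$ is a representation (its $\h$-part) and because $d\vp=0$ by the Proposition (its $\m$-part); and in the $(\h,\m,\m)$-component the $\m$-valued part equals $d\th_\m-\de\vp$, which vanishes by the standing hypothesis $\de\vp=d\th_\m$. The only surviving piece is the $\h$-valued part of $J(h,u_1,u_2)$, and a short direct expansion shows it equals $-Q\vp(h)(u_1,u_2)$; this is exactly where the choice $\th_\h=0$ is used.

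Finally I apply the generalized Jacobi (Bianchi) identity $d_\mu J=0$ --- a purely formal consequence of $\mu$ being bilinear and anti-symmetric --- to the four arguments $(h_1,h_2,u_1,u_2)$ and project the result onto $\h$. The cyclic sum collapses: every term in which $J$ is fed an input triple of type $(\h,\h,\h)$, $(\h,\h,\m)$ or $(\m,\h,\h)$ drops out by the vanishing above, and the only term that could couple the $\m$-valued part $J_\m$ of $J$ back into $\h$ is $\vp\bigl(h_1,J_\m(h_2,u_1,u_2)\bigr)$, which vanishes precisely because $\de\vp=d\th_\m$. What remains reassembles verbatim into
$$\bigl(d(Q\vp)\bigr)(h_1,h_2)(u_1,u_2)=0,$$
giving $[Q\vp]\in H^1(\h,\La^2\m^*\ot\h)$.

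I expect the main obstacle to be bookkeeping rather than ideas: one must verify that the $\m\to\h$ leakage term is the sole cross-term and that it is exactly killed by the hypothesis $\de\vp=d\th_\m$, and one must carry the signs through the cyclic Bianchi sum correctly so that the surviving terms match the Chevalley--Eilenberg differential term by term. As a cross-check independent of this machinery, one can expand $d(Q\vp)$ directly from the definition of $Q$, repeatedly substituting $d\vp=0$ to reduce nested brackets $[h,\vp(\vp(\cdot,\cdot),\cdot)]_\h$ and using the $\h$-Jacobi identity to cancel the genuinely quadratic $[\vp(\cdot,\cdot),\vp(\cdot,\cdot)]_\h$ contributions; this is the more laborious route but confirms the same cancellation.
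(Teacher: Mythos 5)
Your proof is correct, but it follows a genuinely different route from the paper's. The paper proves the lemma by direct expansion: it introduces the auxiliary quantity $\Xi(h_1,h_2,u_1,u_2)$, writes out $d(Q\vp)$ in full, and cancels everything by repeated use of $d\vp=0$, $d\th_\m=\de\vp$ and the Jacobi identity of $\h$. You instead encode the data as a single antisymmetric bracket $\mu$ on $\h\oplus\m$ with $\th_\h=0$, observe that all components of its Jacobiator $J$ relevant to the computation vanish by hypothesis except the $(\h,\m,\m)\to\h$ piece, which is $\pm Q\vp$, and then quote the formal identity $d_\mu J=0$ (the graded Jacobi identity for the Nijenhuis--Richardson bracket, valid for any antisymmetric $\mu$). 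This is the standard obstruction-theoretic mechanism; it explains \emph{why} the lemma holds and makes visible exactly where each hypothesis enters, at the cost of the bookkeeping you correctly flag, which must actually be carried out for the proof to be complete: (i) the surviving terms of $d_\mu J(h_1,h_2,u_1,u_2)$, projected to $\h$, reassemble into the Chevalley--Eilenberg differential of the module $\La^2\m^*\ot\h$ precisely because the coefficient action there (adjoint on the $\h$-slot, $\rho$ on the $\m^*$-slots) is the restriction of $\mu$; (ii) there are two leakage terms, $\vp\bigl(h_1,J_\m(h_2,u_1,u_2)\bigr)$ and $-\vp\bigl(h_2,J_\m(h_1,u_1,u_2)\bigr)$, both killed by $\de\vp=d\th_\m$, while the terms $J(h_i\cdot u_a,h_j,u_b)$ contribute their $\h$-parts to $d(Q\vp)$ and have their $\m$-parts discarded by the projection rather than killed; (iii) the $\La^3\m$-component of $J$ does not vanish in general, but it never enters the evaluation at $(h_1,h_2,u_1,u_2)$, and the overall sign of the identification of $J(h,u_1,u_2)|_\h$ with $\pm Q\vp$ is immaterial. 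With these checks done, your argument is a clean conceptual replacement for the paper's computation, and the same device also yields the Proposition, reproves Lemma \ref{L2}, and organizes the proof of Theorem \ref{Thm}.
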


{\footnotesize\
\begin{proof}
This follows by a straightforward computation: Let
 \begin{align*}
\Xi(h_1,h_2,u_1,u_2)=&h_1\cdot\vp(\vp(h_2,u_1),u_2)-\vp(\vp(h_2,h_1\cdot u_1),u_2)-\vp(\vp(h_2,u_1),h_1\cdot u_2),\\
=&(h_1\cdot\vp)(\vp(h_2,u_1),u_2)+\vp((h_1\cdot\vp)(h_2,u_1),u_2).
 \end{align*}
Then using $d\vp=0$ and $d\theta_\m=\delta\vp$ (and $h_1\cdot h_2=[h_1,h_2]$) we get
 \begin{gather*}
d(Q\vp)(h_1,h_2)(u_1,u_2)=
\Xi(h_1,h_2,u_1,u_2)-\Xi(h_1,h_2,u_2,u_1)-\Xi(h_2,h_1,u_1,u_2)+\Xi(h_2,h_1,u_2,u_1)\\
-(h_1\cdot\vp)(h_2,\theta_\m(u_1,u_2)) -\vp(h_2,(h_1\cdot\theta_\m)(u_1,u_2))\\
+(h_2\cdot\vp)(h_1,\theta_\m(u_1,u_2)) +\vp(h_1,(h_2\cdot\theta_\m)(u_1,u_2))\\
+\vp(\vp([h_1,h_2],u_1),u_2)-\vp(\vp([h_1,h_2],u_2),u_1)-\vp([h_1,h_2],\theta_\m(u_1,u_2))\\
=\Xi(h_1,h_2,u_1,u_2)-\Xi(h_1,h_2,u_2,u_1)-\Xi(h_2,h_1,u_1,u_2)+\Xi(h_2,h_1,u_2,u_1)\\
+\vp(h_1,\vp(h_2,u_1)\cdot u_2)-\vp(h_1,\vp(h_2,u_2)\cdot u_1)
-\vp(h_2,\vp(h_1,u_1)\cdot u_2)+\vp(h_2,\vp(h_1,u_2)\cdot u_1)\\
+\vp(\vp([h_1,h_2],u_1),u_2)-\vp(\vp([h_1,h_2],u_2),u_1)\\
=(\vp(h_2,u_1)\cdot\vp)(h_1,u_2)+\vp(\vp(h_2,u_1)\cdot h_1,u_2)+\vp(h_1,\vp(h_2,u_1)\cdot u_2)\\
-(\vp(h_2,u_2)\cdot\vp)(h_1,u_1)-\vp(\vp(h_2,u_2)\cdot h_1,u_1)-\vp(h_1,\vp(h_2,u_2)\cdot u_1)\\
-(\vp(h_1,u_1)\cdot\vp)(h_2,u_2)-\vp(\vp(h_1,u_1)\cdot h_2,u_2)-\vp(h_2,\vp(h_1,u_1)\cdot u_2)\\
+(\vp(h_1,u_2)\cdot\vp)(h_2,u_1)+\vp(\vp(h_1,u_2)\cdot h_2,u_1)+\vp(h_2,\vp(h_1,u_2)\cdot u_1)\\
=\vp(h_2,u_1)\cdot\vp(h_1,u_2)-\vp(h_2,u_2)\cdot\vp(h_1,u_1)-\vp(h_1,u_1)\cdot\vp(h_2,u_2)+\vp(h_1,u_2)\cdot\vp(h_2,u_1)=0.
 \end{gather*}
 \end{proof}}\vspace{-2pt}

Let us define the linear operators $q:\m^*\ot\h\to\h^*\ot\La^2\m^*\ot\h$, $\sigma\mapsto q_\sigma$, and
$p:(\La^2\m^*\ot\m)^\h\to\h^*\ot\La^2\m^*\ot\h$, $\nu\mapsto p_\nu$, by the formulae
 \begin{align*}
q_\sigma(h)(u_1,u_2)=&
\,d\sigma(\vp(h,u_1),u_2)-d\sigma(\vp(h,u_2),u_1)+\vp(d\sigma(h,u_1),u_2)\\
&-\vp(d\sigma(h,u_2),u_1)+d\sigma(d\sigma(h,u_1),u_2)-d\sigma(d\sigma(h,u_2),u_1),\\
&-\vp(h,\delta\sigma(u_1,u_2))-d\sigma(h,\theta_\m(u_1,u_2))-d\sigma(h,\delta\sigma(u_1,u_2));\\
p_\nu(h)(u_1,u_2)=&\,\vp(h,\nu(u_1,u_2)).
 \end{align*}
For $\sigma\in\m^*\ot\h$ and $\vp,\theta_\m$ as above define the elements $\phi_i\in\La^2\m^*\ot\h$, $1\le i\le 4$, so
 \begin{align*}
\phi_1(u_1,u_2)=&
 \,\vp(\sigma(u_1),u_2)-\vp(\sigma(u_2),u_1), &\
\phi_2(u_1,u_2)=&
 \,[\sigma(u_1),\sigma(u_2)], \\
\phi_3(u_1,u_2)=&
 \,\sigma((\sigma(u_1)\cdot u_2-(\sigma(u_2)\cdot u_1), &\
\phi_4(u_1,u_2)=&
 \,\sigma(\theta_\m(u_1,u_2).
 \end{align*}

 \begin{lem}\label{L2}
We have $d\,p_\nu=0$ and $q_\sigma=d(\phi_1+\phi_2-\phi_3-\phi_4)$, whence $d\,q_\sigma=0$.
 \end{lem}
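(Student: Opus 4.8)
The statement has three parts, of which the last, $d\,q_\sigma=0$, follows at once from the middle one together with $d^2=0$; so the substance is to prove $d\,p_\nu=0$ and the explicit primitive $q_\sigma=d(\phi_1+\phi_2-\phi_3-\phi_4)$. The equality $d\,p_\nu=0$ I would obtain exactly as in Lemma~\ref{L0}. Since $\nu\in(\La^2\m^*\ot\m)^\h$ is $\h$-invariant, post-composition with $\nu$ is a homomorphism of $\h$-modules $C_\nu\colon\m^*\ot\h\to\La^2\m^*\ot\h$, $\psi\mapsto\psi\circ\nu$, and any homomorphism of coefficient modules commutes with the differential $d$. As $p_\nu=C_\nu(\vp)$ and $\vp$ is closed by the Proposition, this gives $d\,p_\nu=C_\nu(d\vp)=0$.

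For the primitive I would compute the differential of each $0$-cochain $\phi_i\in\La^2\m^*\ot\h$ from $d\phi(h)(u_1,u_2)=[h,\phi(u_1,u_2)]-\phi(h\cdot u_1,u_2)-\phi(u_1,h\cdot u_2)$ and compare with the six groups of terms defining $q_\sigma$. The ingredients are the identity $[h,\sigma(u)]=d\sigma(h,u)+\sigma(h\cdot u)$, the cocycle relation $d\vp=0$, the hypothesis $\de\vp=d\th_\m$ in the form $(h\cdot\th_\m)(u_1,u_2)=\vp(h,u_1)\cdot u_2-\vp(h,u_2)\cdot u_1$, and its analogue $(h\cdot\de\sigma)(u_1,u_2)=d\sigma(h,u_1)\cdot u_2-d\sigma(h,u_2)\cdot u_1$, which is $d(\de\sigma)=\de(d\sigma)$ from Lemma~\ref{L0}. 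For $\phi_2$ the Jacobi identity in $\h$ collapses the differential to $d\phi_2(h)(u_1,u_2)=[d\sigma(h,u_1),\sigma(u_2)]-[d\sigma(h,u_2),\sigma(u_1)]$, and expanding each bracket by the first identity yields the group $d\sigma(d\sigma(h,u_1),u_2)-d\sigma(d\sigma(h,u_2),u_1)$ plus a correction $\sigma(d\sigma(h,u_1)\cdot u_2)-\sigma(d\sigma(h,u_2)\cdot u_1)$. For $\phi_3$ and $\phi_4$ the two action formulas above give the groups $-d\sigma(h,\de\sigma(u_1,u_2))$ and $-d\sigma(h,\th_\m(u_1,u_2))$ from $-d\phi_3$ and $-d\phi_4$, each accompanied by a correction of the same two types.

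The delicate case is $\phi_1$. Here I would rewrite $[h,\vp(\sigma(u_1),u_2)]$ by applying $d\vp=0$ twice — once to move $h$ across $\vp$ and once to re-symmetrize the two $\h$-arguments — reducing it to $[\sigma(u_1),\vp(h,u_2)]$ and terms in $\vp$, and then convert the remaining $\h$-brackets via $[h,\sigma(u)]=d\sigma(h,u)+\sigma(h\cdot u)$. After antisymmetrizing in $u_1,u_2$ this reproduces the three remaining groups $d\sigma(\vp(h,u_i),u_j)$, $\vp(d\sigma(h,u_i),u_j)$ and $-\vp(h,\de\sigma(u_1,u_2))$, together with a correction $\sigma(\vp(h,u_1)\cdot u_2)-\sigma(\vp(h,u_2)\cdot u_1)$.

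The sole genuine obstacle is then organizational: to check that in $d\phi_1+d\phi_2-d\phi_3-d\phi_4$ all corrections cancel. The terms $\sigma(d\sigma(h,u_i)\cdot u_j)$ coming from $\phi_2$ and $\phi_3$ cancel against each other, and the terms $\sigma(\vp(h,u_i)\cdot u_j)$ coming from $\phi_1$ and $\phi_4$ likewise cancel, leaving exactly the six groups of $q_\sigma$. A reassuring check that the bracketing is correct is that $q_\sigma$ is precisely $Q(\vp+d\sigma)-Q\vp$ computed with $\th_\m$ replaced by $\th_\m+\de\sigma$; since $d(\vp+d\sigma)=0$ and $\de(\vp+d\sigma)=d(\th_\m+\de\sigma)$, Lemma~\ref{L1} applies to both terms and forces $d\,q_\sigma=0$ independently, in agreement with $d\,q_\sigma=d^2(\phi_1+\phi_2-\phi_3-\phi_4)=0$.
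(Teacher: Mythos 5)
Your proof is correct and follows essentially the same route as the paper: you compute each $d\phi_i$ via the $0$-cochain formula, simplify using $d\vp=0$, $d\th_\m=\de\vp$, $[\de,d]=0$ and the Jacobi identity in $\h$, and match the resulting groups of terms against $q_\sigma$, while $d\,p_\nu=0$ follows from the invariance of $\nu$ exactly as in Lemma~\ref{L0} (the paper verifies this by a two-line direct computation instead). Your closing observation that $q_\sigma=Q(\vp+d\sigma)-Q\vp$ (with $\th_\m$ shifted by $\de\sigma$) is a valid and useful cross-check, consistent with how the lemma is used in the proof of the theorem.
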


{\footnotesize\
\begin{proof}
Here the computations are a bit more involved:
 \begin{align*}
d\phi_1(h)(u_1,u_2)=& \,(h\cdot\vp)(\sigma(u_1),u_2)+\vp(h\cdot\sigma(u_1),u_2)-\vp(\sigma(h\cdot u_1),u_2)\\
-\!&\,(h\cdot\vp)(\sigma(u_2),u_1)-\vp(h\cdot\sigma(u_2),u_1)+\vp(\sigma(h\cdot u_2),u_1)\\
=& \,(\sigma(u_1)\cdot\vp)(h,u_2)-(\sigma(u_2)\cdot\vp)(h,u_1)-\vp(\sigma(h\cdot u_1),u_2)+\vp(\sigma(h\cdot u_2),u_1),\\
d\phi_2(h)(u_1,u_2)=& \,h\cdot(\sigma(u_1)\cdot\sigma(u_2))-\sigma(h\cdot u_1)\cdot\sigma(u_2)+\sigma(h\cdot u_2)\cdot\sigma(u_1),\\
d\phi_3(h)(u_1,u_2)=& \,h\cdot\sigma(\sigma(u_1)\cdot u_2)-\sigma(\sigma(h\cdot u_1)\cdot u_2)-\sigma(\sigma(u_1)\cdot(h\cdot u_2))\\
-\!&\,h\cdot\sigma(\sigma(u_2)\cdot u_1)+\sigma(\sigma(h\cdot u_2)\cdot u_1)+\sigma(\sigma(u_2)\cdot(h\cdot u_1)).
 \end{align*}
Therefore (we again use $d\vp=0$ and $d\theta_\m=\delta\vp$ and also the Jacobi identity)
 \begin{gather*}
q_\sigma(h)(u_1,u_2)=
\vp(h,u_1)\cdot\sigma(u_2)-\sigma(\vp(h,u_1)\cdot u_2)-\vp(h,u_2)\cdot\sigma(u_1)+\sigma(\vp(h,u_2)\cdot u_1)\\
+\vp(h\cdot\sigma(u_1),u_2)-\vp(\sigma(h\cdot u_1),u_2)-\vp(h\cdot\sigma(u_2),u_1)+\vp(\sigma(h\cdot u_2),u_1)\\
+(h\cdot\sigma(u_1))\cdot\sigma(u_2)-\sigma(h\cdot u_1)\cdot\sigma(u_2)
-\sigma((h\cdot\sigma(u_1))\cdot u_2)+\sigma(\sigma(h\cdot u_1)\cdot u_2)\\
-(h\cdot\sigma(u_2))\cdot\sigma(u_1)+\sigma(h\cdot u_2)\cdot\sigma(u_1)
+\sigma((h\cdot\sigma(u_2))\cdot u_1)-\sigma(\sigma(h\cdot u_2)\cdot u_1)\\
-\vp(h,\sigma(u_1)\cdot u_2)+\vp(h,\sigma(u_2)\cdot u_1)-h\cdot\sigma(\theta_\m(u_1,u_2))+\sigma(h\cdot\theta_\m(u_1,u_2))\\
-h\cdot\sigma(\sigma(u_1)\cdot u_2)+\sigma(h\cdot(\sigma(u_1)\cdot u_2))
+h\cdot\sigma(\sigma(u_2)\cdot u_1)-\sigma(h\cdot(\sigma(u_2)\cdot u_1))\\
 =
[\vp(h,u_1)\cdot\sigma(u_2)-\vp(h,u_2)\cdot\sigma(u_1)+\vp(h\cdot\sigma(u_1),u_2)-\vp(\sigma(h\cdot u_1),u_2)\\
-\vp(h\cdot\sigma(u_2),u_1)+\vp(\sigma(h\cdot u_2),u_1)-\vp(h,\sigma(u_1)\cdot u_2)+\vp(h,\sigma(u_2)\cdot u_1)]\\
 +
[(h\cdot\sigma(u_1))\cdot\sigma(u_2)-(h\cdot\sigma(u_2))\cdot\sigma(u_1)
-\sigma(h\cdot u_1)\cdot\sigma(u_2)+\sigma(h\cdot u_2)\cdot\sigma(u_1)]\\
 -
[h\cdot\sigma(\sigma(u_1)\cdot u_2)-h\cdot\sigma(\sigma(u_2)\cdot u_1)
-\sigma(\sigma(h\cdot u_1)\cdot u_2)+\sigma(\sigma(h\cdot u_2)\cdot u_1)\\
+\sigma((h\cdot\sigma(u_1))\cdot u_2)-\sigma(h\cdot(\sigma(u_1)\cdot u_2))
-\sigma((h\cdot\sigma(u_2))\cdot u_1)+\sigma(h\cdot(\sigma(u_2)\cdot u_1))]\\
 -
[\sigma(\vp(h,u_1)\cdot u_2)-\sigma(\vp(h,u_2)\cdot u_1)+(h\cdot(\sigma\circ \theta_\m))(u_1,u_2)-\sigma((h\cdot\theta_\m)(u_1,u_2))]\\
=d\phi_1(h)(u_1,u_2)+d\phi_2(h)(u_1,u_2)-d\phi_3(h)(u_1,u_2)-d\phi_4(h)(u_1,u_2).
 \end{gather*}
The other computation is easy:
 \begin{align*}
d(p_\nu)(h_1,h_2)(u_1,u_2)&=d\vp(h_1,h_2)(\nu(u_1,u_2))\\
&-\vp(h_1)((h_2\cdot\nu)(u_1,u_2))+\vp(h_2)((h_1\cdot\nu)(u_1,u_2))=0.
 \end{align*}
 \end{proof}}\vspace{-2pt}

Thus $\op{Im}(q_\sigma)\subset B^1(\h,\La^2\m^*\ot\h)$, $\op{Im}(p_\nu)\subset Z^1(\h,\La^2\m^*\ot\h)$.
Let us denote $\Pi_\vp=\op{Im}(p_\nu)\,\op{mod}B^1(\h,\La^2\m^*\ot\h)\subset H^1(\h,\La^2\m^*\ot\h)$.

We are now ready to state our main result.

 \begin{thm}\label{Thm}
The Jacobi identity $\op{Jac}(v_1,v_2,v_3)=0$ with 1 argument from $\h$ and
the others from $\m$ gives the following constraints on the cohomology $[\vp]\in H^1(\h,\m^*\ot\h)$:
(1) $[\de\vp]=0\in H^1(\h,\La^2\m^*\ot\m)$,  whence $\de\vp=d\th_\m$;\\
(2) $[Q\vp]\equiv0\in H^1(\h,\La^2\m^*\ot\h)\,\op{mod}\Pi_\vp$, so $Q\vp=d\th_\h$ for some choices of $\vp,\theta_\m$.
 \end{thm}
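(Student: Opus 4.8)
The plan is to evaluate the last outstanding Jacobi identity, $\op{Jac}(h,u_1,u_2)=[h,[u_1,u_2]]+[u_1,[u_2,h]]+[u_2,[h,u_1]]$ for $h\in\h$ and $u_1,u_2\in\m$, by substituting $[u_1,u_2]=\th_\h(u_1,u_2)+\th_\m(u_1,u_2)$ and $[h,u]=\vp(h,u)+h\cdot u$, and then reading off the $\m$-valued and $\h$-valued parts separately. The heavy cohomological bookkeeping has already been carried out in Lemmas~\ref{L0}--\ref{L2}, so the task here is to match each part of the expanded identity against the cochains $\de\vp,d\th_\m$ and $Q\vp,d\th_\h$, and then to translate solvability into the stated cohomological conditions.

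For the $\m$-part I would gather the three $\th_\m$-terms; using $\th_\m(u_2,h\cdot u_1)=-\th_\m(h\cdot u_1,u_2)$ they assemble into $h\cdot\th_\m(u_1,u_2)-\th_\m(h\cdot u_1,u_2)-\th_\m(u_1,h\cdot u_2)=d\th_\m(h)(u_1,u_2)$, while the two surviving terms $\vp(h,u_2)\cdot u_1-\vp(h,u_1)\cdot u_2$ equal $-\de\vp(h)(u_1,u_2)$. Thus the $\m$-part of Jacobi is exactly the equation $d\th_\m=\de\vp$. Since $\de\vp$ is closed by Lemma~\ref{L0}, this is solvable for the $0$-cochain $\th_\m$ if and only if $\de\vp$ is exact, i.e.\ $[\de\vp]=0\in H^1(\h,\La^2\m^*\ot\m)$; this is assertion~(1).

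For the $\h$-part I would similarly collect the three $\th_\h$-terms, which by antisymmetry and the $\h$-action on $\La^2\m^*\ot\h$ combine into $d\th_\h(h)(u_1,u_2)$, while the remaining terms $\vp(\vp(h,u_2),u_1)-\vp(\vp(h,u_1),u_2)+\vp(h,\th_\m(u_1,u_2))$ are precisely $-Q\vp(h)(u_1,u_2)$. Hence the $\h$-part reads $Q\vp=d\th_\h$, which is solvable for $\th_\h$ iff $Q\vp$ is exact; by Lemma~\ref{L1} the class $[Q\vp]\in H^1(\h,\La^2\m^*\ot\h)$ is well defined.

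The final point is to account for the freedom in the data and thereby produce the $\op{mod}\Pi_\vp$ statement. A solution $\th_\m$ of~(1) is unique only modulo an invariant $\nu\in(\La^2\m^*\ot\m)^\h=\ker d$, and since $Q\vp$ depends on $\th_\m$ solely through the term $-\vp(h,\th_\m(u_1,u_2))$, replacing $\th_\m$ by $\th_\m+\nu$ changes $Q\vp$ by $-p_\nu$ and hence shifts $[Q\vp]$ through all of $\Pi_\vp$. Replacing the complement $\m$ by $\op{graph}(\sigma)$ turns $(\vp,\th_\m)$ into $(\vp+d\sigma,\th_\m+\de\sigma)$ and alters $Q\vp$ by $q_\sigma$, which is exact by Lemma~\ref{L2} and so leaves $[Q\vp]$ fixed. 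Consequently $[Q\vp]\,\op{mod}\,\Pi_\vp$ is the genuine invariant, and the equation $Q\vp=d\th_\h$ admits a solution for some admissible $\vp,\th_\m$ exactly when $[Q\vp]\equiv0\,\op{mod}\,\Pi_\vp$, giving assertion~(2). I expect the only real obstacle to be the sign- and symmetry-bookkeeping in the two bracket expansions; once the $\m$- and $\h$-parts are correctly identified, the cohomological conclusions follow directly from Lemmas~\ref{L0}--\ref{L2}.
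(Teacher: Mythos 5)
Your proposal is correct and follows essentially the same route as the paper: expand $\op{Jac}(h,u_1,u_2)$, identify the $\m$-part with $d\th_\m=\de\vp$ and the $\h$-part with $Q\vp=d\th_\h$, and then invoke Lemmas~\ref{L0}--\ref{L2} to handle the freedoms in $\vp$ (shift by $d\sigma$, absorbed by the exact $q_\sigma$) and in $\th_\m$ (shift by $\nu\in(\La^2\m^*\ot\m)^\h$, producing the $\op{mod}\Pi_\vp$ ambiguity). The only difference is that you carry out the term-by-term bookkeeping slightly more explicitly than the paper does.
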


 \begin{proof}
Consider the Jacobi identity with 1 argument from $\h$:
 $$
[h,[u_1,u_2]]+[u_1,[u_2,h]]+[u_2,[h,u_1]]=0.
 $$
Taking $\m$-part of this identity (this is canonical: projection along $\h$), we obtain
 $$
\de\vp(h)(u_1,u_2)=(h\cdot\th_\m)(u_1,u_2)\quad\Leftrightarrow\quad \de\vp
=d\th_\m\in B^1(\h,\La^2\m^*\ot\m).
 $$
This implies (1).

Two remarks are in order. First, changing $\vp\mapsto\vp+d\sigma$ we do not alter the property $[\de\vp]=0$.
Second, $\th_\m$ is determined by constraint (1) modulo $(\La^2\m^*\ot\m)^\h$.
This changes $Q\vp$ by an element $p_\nu\in Z^1(\h,\La^2\m^*\ot\m)$ due to Lemma \ref{L2}.

To obtain (2) consider the $\h$-part of the Jacobi identity.
Note that if we change $\vp\mapsto\vp+d\sigma$, then $\theta_\m\mapsto\theta_\m+\delta\sigma$,
so $Q\vp\mapsto Q\vp+q_\sigma$. This leaves $Q\vp$ closed by Lemma \ref{L1} and does not change the
cohomology class by Lemma \ref{L2}. However, the latter is influenced by the change of $\theta_\m$
as indicated above: $[Q\vp]\mapsto [Q\vp]+[p_\nu]$.

The $\h$-part of the Jacobi identity can now be written for some $p_\nu\in(\La^2\m^*\ot\m)^\h$
(the freedom in a choice of $\theta_\m$) as
 $$
Q\vp+p_\nu=d\theta_\h \quad\Leftrightarrow\quad [Q\vp]\equiv0\,\op{mod}\Pi_\vp.
 $$
This implies (2).
 \end{proof}

Now the reconstruction algorithm from the data $(\h,\m,\rho)$ is the following:
 \begin{itemize}
\item Compute $H^1(\h,\mathbb{V})$ for $\mathbb{V}=\m^*\ot\h, \La^2\m^*\ot\m, \La^2\m^*\ot\h$.
\item Constrain the Lie bracket parameters by (1) and (2) in the theorem.
\item Constrain them further by the quadratic relations $\op{Jac}_\m:\Lambda^3\m\to\h\oplus\m$ so:
 \begin{gather*}
\mathfrak{S}\,\bigl[\vp(\theta_\h(u_1,u_2),u_3)+ \theta_\h(\theta_\m)(u_1,u_2),u_3)\bigr]=0,\\
\mathfrak{S}\,\bigl[\theta_\h(u_1,u_2)\cdot u_3+ \theta_\m(\theta_\m)(u_1,u_2),u_3)\bigr]=0,
 \end{gather*}
where $\mathfrak{S}$ is the cyclic summation by indices $1,2,3$.
 \end{itemize}

\section{Some specifications}

In this section we discuss computation of the cohomology involved in the constraints of the theorem in several cases.

\smallskip

{\bf 1: Reductive isotropy.} Let us first note that if $\h$ is a semi-simple Lie algebra then
$H^1(\h,\mathbb{V})=0$ for any $\h$-module $\mathbb{V}$ (Whitehead's lemma \cite{SL}).
The same holds true if $\h$ is reductive with the center action on $\mathbb{V}$ being semi-simple
and with no linear invariants.
Then choosing $\vp=0$ the conditions of the theorem equivalently mean that
 $$
\theta_\h:\Lambda^2\m\to\h,\ \theta_\m:\Lambda^2\m\to\m
 $$
are $\h$-equivariant maps. The parameters of these maps are constrained by the
quadratic conditions $\op{Jac}_\m=0$.
Examples of reconstruction of $\g$ in the case of $\h=\mathfrak{sl}(2),\mathfrak{su}(2)$
and $\dim\m=6$ can be found in \cite{AKW}.

\smallskip

{\bf 2: Internal gradings.} Notice that if $\h$ is graded $\h=\oplus_\alpha\h_\alpha$ with $\h_0$ Abelian and $\mathbb{V}$ is a graded $\h$-module $\mathbb{V}=\oplus\mathbb{V}_\alpha$, then \cite{FF}
the cohomology $H^\bullet(\h,\mathbb{V})$ of the complex $\Lambda^\bullet\h^*\ot\mathbb{V}$
coincides with that of the subcomplex $(\Lambda^\bullet\h^*\ot\mathbb{V})_0$ of total degree zero cochains,
and the same is true if we use multi-grading given by $\h$ (i.e.\ $\alpha$ takes values in a
multi-dimensional lattice).

In particular, if $\h$ is Abelian, and $\mathbb{V}$ completely reducible, then $H^1(\h,\mathbb{V})=\h^*\ot\mathbb{V}_0$,
where $\mathbb{V}_0=\mathbb{V}^\h$ is the trivial component. Indeed, for any reductive Lie algebra $\h$ with the center
acting semi-simply on $\mathbb{V}$ we have $H^\bullet(\h,\mathbb{V})=H^\bullet(\h)\otimes\mathbb{V}^\h$ \cite{FF} and if
$\h$ is Abelian, then the differential of its cohomology complex vanishes.

\smallskip

{\bf 3: One-dimensional space of splittings.} In the case $\dim H^1(\h,\m^*\ot\h)=1$
we can normalize $[\vp]=0\vee1$. In the case $[\vp]=0$ an $\h$-invariant splitting $\g=\h\oplus\m$ exists,
and the solutions $(\theta_\m,\theta_\h)$ of the theorem can be found as $\h$-equivariant elements
$(\La^2\m^*\ot\g)^\h$. If $[\vp]=1$, then the solutions space to the constraints of Theorem \ref{Thm} is
an affine space modelled on $(\La^2\m^*\ot\g)^\h$. Indeed, the system of equations
$d(\theta_\m,\theta_\h)=(\delta\vp,Q\vp)$ is linear inhomogeneous in the unknowns $(\theta_\m,\theta_\h)$.

With one solution (often) a-priori known it is easy to parametrize all solutions (to the constraints of Theorem
\ref{Thm}; these parameters are yet subject to the Jacobi constraints with 3 arguments from $\m$).

\smallskip

{\bf 4: Cartan subalgebra.} Let $\h$ be a Cartan subalgebra of a semi-simple Lie algebra $\g$, and $\m=\g/\h$.
Since $\m_0=\m^\h=0$ we have $H^\bullet(\h,\m^*\ot\h)=0$. In particular, there is a reductive complement.
For the split real form $\g$, we can take the sum of root spaces $\m=\sum_\alpha\R\cdot e_\alpha$ as such.

The other cohomologies involved in Theorem \ref{Thm} are non-trivial (this will have an implication in the
next section): $H^\bullet(\h,\La^2\m^*\ot\m)=\La^\bullet\h^*\ot(\La^2\m^*\ot\m)_0$
(with $0$ referring to the multi-grading induced by a set of simple roots;
if the real form $\g$ is not split, the complexification can be used at this step), and similarly
$H^\bullet(\h,\La^2\m^*\ot\h)=\La^\bullet\h^*\ot(\La^2\m^*)_0\ot\h$.

Note that the submodule $(\La^2\m^*)_0$ is generated by the elements
$e_\alpha\otimes\theta^\alpha$, where $\theta^\alpha$ is the co-basis dual to the basis $e_\alpha$ of $\m$, and similarly
the submodule $(\La^2\m^*\ot\m)_0$ is generated by the elements $e_{\alpha+\beta}\otimes\theta^\alpha\we\theta^\beta$.
Thus both submodules and hence the cohomology groups are non-trivial.

\smallskip

{\bf 5: Nilradical of a parabolic.} Let $\g=\oplus\g_i$ be a semi-simple Lie algebra
with the grading induced by the parabolic subalgebra $\mathfrak{p}=\oplus_{i\ge0}\g_i$.
We choose as subalgebra the nilradical of the opposite parabolic: $\h=\oplus_{i<0}\g_i$,
and $\m=\g/\h=\mathfrak{p}$.

The cohomology $H^\bullet(\h,\mathbb{V})$ for $\g$-modules $\mathbb{V}$ (restricted to $\h\subset\g$) is given
(as a $\g_0$-module) by Kostant's version of the Bott-Borel-Weyl theorem \cite{Kos},
however in the case $\m=\mathfrak{p}$ it is not a $\g$-module and the computations are more involved.

We compute the cohomology $H^1(\h,\m^*\ot\h)$ in the case of $|1|$-gradings\footnote{The $|1|$-gradings
of simple complex Lie algebras are: $A_\ell/P_k$, $B_\ell/P_1$, $C_\ell/P_\ell$, $D_\ell/P_1$, $D_\ell/P_\ell$, $E_6/P_6$,
$E_7/P_7$ in Bourbaki's ordering of the nodes on the Dynkin diagram. Which of those extend to the real versions
is decided by the Satake diagram.}:
$\g=\h\oplus\g_0\oplus\h^*$ (where $\g_1$ is identified with $\h^*=\g_{-1}^*$ via the Killing form,
and similarly $\g_0^*=\g_0$). We have: $\m=\g_0\oplus\h^*$, $\m^*=\g_0\oplus\h$
(this decomposition of modules is not $\h$-invariant; to get an invariant computation one
should use the spectral sequences based on the $\h$-invariant filtration, but we skip doing so).

The cochain complex for the cohomology is
 $$
0\to(\g_0\oplus\h)\ot\h\stackrel{d_0}\longrightarrow\h^*\ot(\g_0\oplus\h)\ot\h\stackrel{d_1}\longrightarrow
\La^2\h^*\ot(\g_0\oplus\h)\ot\h\to\dots
 $$
where $d_0$ projects the first term to $\g_0\ot\h\subset\h^*\ot\h\ot\h$ and
$d_1$ projects the second term to $\h^*\ot\g_0\ot\h\stackrel{\delta\ot\op{id}_\h}\longrightarrow\La^2\h^*\ot\h\ot\h$,
with $\delta:\h^*\ot\g_0\subset\h^*\ot\h^*\ot\h\to\La^2\h^*\ot\h$ being the Spencer operator
(skew-symmetrization). Thus $B^1=\g_0\ot\h\subset\h^*\ot\h\ot\h$, $Z^1=\g_0^{(1)}\ot\h\oplus\h^*\ot\h\ot\h$,
where $\g_0^{(1)}=\op{Ker}(\delta)=\g_0\ot\h\cap S^2\h^*\ot\h$ is the Sternberg-Spencer prolongation of $\g_0$, whence
$H^1=Z^1/B^1=[\g_0^{(1)}+\h^*\ot\h/\g_0]\ot\h$.

By Yamaguchi's prolongation theorem \cite{Y} we have $\g^{(1)}=\g_1=\h^*$ in all $|1|$-graded cases except $A_\ell/P_1$
(and dually $A_\ell/P_\ell$). In the latter case $\g_0=\mathfrak{gl}(\h)$, so $\g^{(1)}=S^2\h^*\ot\h$.
Thus we conclude
 $$
H^1(\h,\m^*\ot\h)=\left\{\begin{aligned}
& S^2\h^*\ot\h\ot\h & \text{when the grading is }A_\ell/P_1,\\
& \left[\h^*+\frac{\h^*\ot\h}{\g_0}\right]\ot\h & \text{otherwise}.
\end{aligned}\right.
 $$

\section{Applications}

In this section we consider some simple applications, illustrating the developed technique.
More substantial outcomes can be extracted from \cite{KW1,KW2}
(the first reference is in retrospective, while the second essentially uses the results of this paper).
An application to reconstruction from another technique can be found in \cite{FS}.

\medskip

We start with a toy example: one-dimensional subalgebra $\h\subset\mathfrak{sl}_2$.
Let $\m=\mathfrak{sl}_2/\h$ and $\rho$ be the isotropy representation.
If $\h$ is a (non-compact) Cartan subalgebra, then the triple $(\h,\m,\rho)$ recovers either the original
Lie algebra $\mathfrak{sl}_2$ or the algebra $\R\ltimes\R^2$, where the action of $\h=\R$ on the Abelian
piece $\R^2$ is by the matrix $\op{diag}(-1,1)$.
If, on the other hand, $\h$ is nilpotent, then the triple $(\h,\m,\rho)$ recovers either the original
semi-simple Lie algebra or a solvable algebra $\g$ with $[\g,\g]\supset\h$.

Now if $\h$ is two-dimensional (Borel), then $(\h,\m,\rho)$ recovers either the original $\g=\mathfrak{sl}_2$
or the above $\R\ltimes\R^2=\langle e_1,e_2,e_3:[e_1,e_2]=e_2,[e_1,e_3]=-e_3]\rangle$, $\h=\langle e_1,e_2\rangle$.
Note that for the corresponding homogeneous space $G/H$ the isotropy has a kernel: in the Lie
subalgebra $\h$ the element $e_2$ generates an ideal and thus acts non-effectively.
As a result we obtain an effective homogeneous representation $G/H=G'/H'$ with Lie algebras corresponding to the groups:
$\g'=\langle e_1,e_3\rangle$, $\h=\langle e_1\rangle$.

This motivates the following

 \begin{dfn}
We call a pair $(\h,\g)$ reconstruction rigid, if $\g$ is the unique non-flat algebra with
the isotropy data $(\h,\m,\rho)$ and no nontrivial $\g$-ideals supported in $\h$.
 \end{dfn}

Recall that the flat algebra is $\g=\h\ltimes\m$, where $\m$ is Abelian and the bracket $\h\we\m\to\m$
is $\rho$. Now we consider three examples.

\smallskip

{\bf 1: Borel subalgebra in $\mathfrak{sl}_3$.}
Let $\h=\mathfrak{b}$ be a minimal parabolic (maximal solvable) subalgebra in $\mathfrak{sl}_3$
(realized as the set of upper-triangular matrices) and $\m=\mathfrak{sl}_3/\h$.
The cohomology group $H^1(\h,\m^*\ot\h)=\R$, so there exists a unique (one-dimensional) obstruction to finding
a reductive complement to $\h$. If the obstruction vanishes, then the Lie algebra structure on $\h\oplus\m$
is encoded by $(\La^2\m^*\ot(\h\oplus\m))^\h=0$. So this case is flat: the only possible Lie algebra structure is
$\g=\mathfrak{b}\ltimes\R^3$.

If the obstruction $[\vp]\in\R$ is non-zero, it can be normalized to 1.
In this case we compute the other cohomologies\footnote{In this and subsequent computations
the \textsl{DifferentialGeometry} package of \textsc{Maple} was used.}:
$H^1(\h,\La^2\m^*\ot\m)=\R^3$, $H^1(\h,\La^2\m^*\ot\h)=0$.
The first constraint of our theorem, $\delta\vp=d\theta_\m$, is non-trivial and locks all the structure
constants to be equal to that of $\mathfrak{sl}_3$.
Thus $(\mathfrak{b},\mathfrak{sl}_3)$ is reconstruction rigid.
This shows the uniqueness of (effective) homogeneous representation with 5D isotropy of the flag variety $F(1,2,3)=\op{SL}_3(\R)/B=O(3)/\mathbb{Z}_2^3$
(this Klein geometry is non-reductive unless we reduce the symmetry group: $F(1,2,3)=O(3)/\mathbb{Z}_2^3$).

Note that the isotropy data of the homogeneous space $M^3=G/H$ include the invariant contact 2-distribution
(it is integrable in the flat case) that is split as a sum of two line fields $\Pi^2=L^1_1\oplus L^1_2\subset TM$.
This geometric structure corresponds to a second order ODE with respect to point transformations.
Our computation shows that there exists a unique ODE with symmetry of the maximal dimension 8
(this is indeed $y''(x)=0$ viewed as a pair of line fields -- vertical and the total derivative --
on $J^1(\R,\R)=M^3(x,y,y')$). In fact, it is known that the other homogeneous geometries encoding second order
ODEs can exist on 3-dimensional Lie groups only (trivial isotropy).
Our approach allows us to independently verify this.

\medskip

{\bf 2: Cartan subalgebra in $\mathfrak{sl}_3$.}
Let $\h=\mathfrak{c}$ be a Cartan subalgebra in $\mathfrak{sl}_3$
(realized as the set of diagonal matrices) and $\m=\mathfrak{sl}_3/\h$.
Since we know from the previous section that $H^1(\h,\m^*\ot\h)=0$ and $\m\subset\g$ can be $\h$-invariant,
the reconstruction is reduced to classifying $\h$-equivariant maps $\La^2\m\to\g=\h\oplus\m$.
A simple computation shows that the results of reconstruction is as follows:
 \begin{enumerate}
\item $\g=\mathfrak{sl}_3$, $\h=\mathfrak{c}$;
\item $\g=\mathfrak{gl}_2\ltimes W$, $\h\subset\mathfrak{gl}_2$ is the diagonal subalgebra, $W$ is Abelian
and is $\R^2\oplus\R^{2*}$ as $\mathfrak{gl}_2$-representation;
\item $\g=\R^2\ltimes W$, where $W=\R^3+\La^2\R^3$ is the 2-step nilpotent algebra with the derived series
of dimensions $(6,3,0)$;
\item $\g=\R^2\ltimes W$, where $W=((\R\oplus\R^2)+\R\otimes\R^2)\oplus\R$ and the first summand is
the 2-step nilpotent algebra with the derived series of dimensions $(5,2,0)$;
\item $\g=\R^2\ltimes W$, where $W=\mathfrak{heis}_3\oplus\mathfrak{heis}_3$ is the sum of two Heisenberg algebras;
\item $\g=\R^2\ltimes W$, where $W=\mathfrak{heis}_3\oplus\R^3$;
\item $\g=\R^2\ltimes W$, where $W=\R^6$ is the Abelian algebra (this is the flat case).
 \end{enumerate}
In cases (3)-(7) the action of $\h=\R^2$ on $W$ is chosen to match the isotropy data.

Thus $(\mathfrak{c},\mathfrak{sl}_3)$ is not reconstruction rigid, and the real version $\op{SL}_3(\R)/(\R^\times)^2$
of the complex flag variety $F^\C(1,2,3)=\op{SL}_3(\C)/B^\C=U(3)/T^3=SU(3)/T^2$ is not (uniquely) recoverable
from its isotropy data.

\medskip

{\bf 3: Homogeneous 4D almost complex spaces with $\op{Sol}_2$ isotropy.}
Let us investigate 4-dimensional almost complex homogeneous spaces with 2-dimensional solvable (non-abelian) isotropy,
i.e.\ $\h$ preserves a complex structure on $\m$. We assume the isotropy representation $\rho$ to be effective.
If the almost complex structure $J$ is non-integrable, then the isotropy representation
$\rho:\mathfrak{sol}_2\to\mathfrak{gl}_2(\C)$ also leaves invariant the Nijenhuis tensor
$0\neq N_J\in\La^2_\C\C^{2*}\ot_{\bar\C}\C^2$: $\rho(v)\cdot N_J=0$ $\forall v\in\mathfrak{sol}_2$.
This uniquely normalizes the representation in some complex basis of $\C^2$ so:
 $$
e_1=\begin{pmatrix}0&0\\0&1\end{pmatrix},\ e_2=\begin{pmatrix}0&0\\1&0\end{pmatrix};\quad
[e_1,e_2]=e_2.
 $$
Now we compute the cohomology:
 $$
H^1(\h,\m^*\ot\h)=\R^2,\ H^1(\h,\La^2\m^*\ot\m)=\R^8,\ H^1(\h,\La^2\m^*\ot\h)=\R^4.
 $$
Thus a-priori we get a non-reductive decomposition and the bracket $\h\we\m\to\g=\h\oplus\m$
takes values in both summands, being parametrized by 2 real numbers. These two
parameters are however forced to vanish by the further homological constraints of Theorem \ref{Thm}
(to be precise, by $[\delta\vp]=0\in H^1(\h,\La^2\m^*\ot\m)$ forming 8 scalar linear equations).
Thus a-posteriori the $\h$-module $\g$ splits, and the remaining brackets
can be chosen as elements of the module $(\La^2\m^*\ot\g)^\h$
satisfying the Jacobi identity.

These are computable, however another look at the Jacobi identity shows
that the complement $\m\subset\g$ can be chosen in such a way that both $\h$ and $\m$ are subalgebras,
though the bracket $[\h,\m]$ takes values in the whole $\g$. This still allows to conclude that the
almost complex structure $J$ on the homogeneous space $M=G/H$ coincides with the left-invariant structure on
some Lie group $M^4$. Such groups are classified, but we do not include this rather long list.

Finally notice that by \cite{Kr} a symmetry of a (non-integrable) almost complex structure $J$ on
a 4-manifold $M$ is at most 4D (in this case $J$ is the left-invariant structure on a Lie group)
unless the 2-distribution $\op{Im}(N_J)\subset TM$ is integrable and $J,N_J$ are projectible along its leaves.
We may also observe this from the obtained structure equations of $\h\oplus\m$ in our case.

\appendix
\section{Lie algebra cohomology}

For completeness recall the formula for the Lie algebra differential in the complex $\Lambda^\bullet\h^*\otimes\mathbb{V}$ for $H^\bullet(\h,\mathbb{V})$. If
$\vp\in\Lambda^k\h^*\otimes\mathbb{V}$ and $h_i\in\h$, then
 \begin{align*}
d\vp(h_0,\dots,h_k)=\sum&(-1)^i h_i\cdot\vp(h_0,\dots,\check{h}_i,\dots,h_k)\\
+& (-1)^{i+j}\vp([h_i,h_j],h_0,\dots,\check{h}_i,\dots,\check{h}_j,\dots,h_k).
 \end{align*}
Note that if $d\vp=0$, then $h\cdot\vp=d\,i_h\vp$, where $i_h\vp=\vp(h,\cdot)\in\Lambda^{k-1}\h^*\otimes\mathbb{V}$
is the result of substitution of $h$ as the first argument. Thus
$H^\bullet(\h,\mathbb{V})=H^\bullet(\h,\mathbb{V})^\h$ and the equivariancy (of e.g.\ $[\vp]$)
is not a constraint on the cohomology classes.



\begin{thebibliography}{WW}
 \footnotesize

\bibitem{AKW}
D.\,V.\ Alekseevsky, B.\,S.\ Kruglikov, H.\ Winther, {\it Homogeneous almost complex structures in dimension 6 with semi-simple isotropy\/}, Ann. Glob. Anal. Geom. {\bf 46}, 361–387 (2014).

\bibitem{C}
E. Cartan, {\it Le\c{c}ons sur la \'geo\'metrie des espaces de Riemann\/}, Gauthier-Villars, Paris (1951).

\bibitem{CK}
S.-J.\ Cheng, V.\,G.\ Kac, {\it Generalized Spencer cohomology and filtered deformations of $\mathbb{Z}$-graded Lie superalgebras\/},
Adv.\ Theor.\ Math.\ Phys.\ {\bf 2}, no. 5, 1141-1182 (1998);
Addendum: Adv.\ Theor.\ Math.\ Phys.\ {\bf 8}, no. 4, 697-709 (2004).

\bibitem{FF}
B.\,L.\ Feigin, D.\,B.\ Fuchs, {\it Cohomology of Lie groups and Lie algebras\/},
Lie groups and Lie algebras – II, Itogi Nauki i Tekhniki {\bf 21}, VINITI (Russian), Moscow, 121–209 (1988);
Engl: Encyclopaedia Math. Sci.\ {\bf 21}, Springer, Berlin (2000).

\bibitem{FS}
J.\ Figueroa-O'Farrill, A.\ Santi, {\it Spencer cohomology and eleven-dimensional supergravity\/},
Commun.\ Math.\ Phys.\ doi:10.1007/s00220-016-2700-1 (2016).

\bibitem{Gi}
A. Guichardet, {\it Cohomologie des groupes topologiques et des algèbres de Lie\/}, Cedic/Nathan, Paris (1980).


\bibitem{Kos}
B. Kostant, {\it Lie algebra cohomology and the generalized Borel--Weil theorem\/},
Ann. Math. {\bf 74}, no. 2, 329--387 (1961).

\bibitem{Kr}
B. Kruglikov, {\it Symmetries of  almost complex structures and pseudoholomorphic foliations\/},
International Journal of Mathematics {\bf 25}, no. 8, 1450079 (2014).


\bibitem{KW1}
B. Kruglikov, H. Winther, {\it Almost complex structures in 6D with non-degenerate Nijenhuis tensors
and large symmetry groups\/}, Ann. Glob. Anal. Geom. {\bf 50}, 297–314 (2016).

\bibitem{KW2}
B. Kruglikov, H. Winther, {\it Nondegenerate para-complex structures in 6D with large symmetry\/},
arXiv:1611.05767; to appear in Ann. Glob. Anal. Geom. (2017).

\bibitem{R}
D.\,S.\ Rim, {\it Deformation of transitive Lie algebras\/}, Ann. of Math. (2) {\bf 83}, 339–357 (1966).

\bibitem{SL}
Seminar "Sophus Lie", {\it Th\'eorie des alg\`ebres de Lie. Topologie des groupes de Lie\/},
Paris, Secr\'etariat Math. (1955).

\bibitem{Y}
K. Yamaguchi, {\it Differential systems associated with simple graded Lie algebras\/},
Advanced Studies in Pure Mathematics {\bf 22}, Progress in Differential Geometry, 413--494 (1993).

\end{thebibliography}
\end{document}